\date{22 April 2010} %% Accepted by Math. Scan.
\newtheorem{thm}{Theorem}
\newtheorem{cor}[thm]{Corollary}
\newtheorem{lemma}[thm]{Lemma}
\newtheorem{prop}[thm]{Proposition}
\theoremstyle{definition}
\newtheorem{remark}[thm]{Remark}
\def\mathcs{C^{*}}
\newcommand{\cs}{\ensuremath{\mathcs}}
\DeclareMathSymbol{\rtimes}{\mathbin}{AMSb}{"6F}
\newcommand{\ib}{im\-prim\-i\-tiv\-ity bi\-mod\-u\-le}
\newcommand{\sme}{\,\mathord{\mathop{\text{--}}\nolimits_{\relax}}\,}
\def\ibind#1{\mathop{#1\mathord{\mathop{\text{--}}}}\!\Ind\nolimits}
\def\R{\mathsf{R}}
\def\C{\mathsf{C}}
\DeclareMathOperator{\Ind}{Ind}
\DeclareMathOperator{\Prim}{Prim}
\def\set#1{\{\,#1\,\}}
\newcommand\sset[1]{\{#1\}}
\let\tensor=\otimes
\def\restr#1{|_{{#1}}}
\newbox\hidebox
\def\spechide#1{\setbox\hidebox=\hbox{$#1$}
\hbox to\wd\hidebox{$\box\hidebox^\wedge$\hss}}
\def\labelenumi{\textnormal{(\@alph\c@enumi)}}
\def\theenumi{\@alph \c@enumi}
\def\labelenumii{\textnormal{(\@roman\c@enumii)}}
\def\theenumii{\@roman \c@enumii}
\def\alphapart#1{\charno=96
\advance\charno by#1\char\charno}
\def\<{\langle}
\def\>{\rangle}
\let\ipscriptstyle=\scriptscriptstyle
\def\lipsqueeze{{\mskip -3.0mu}}
\def\ripsqueeze{{\mskip -3.0mu}}
\def\ipcomma{\nobreak\mathrel{,}\nobreak}
\newbox\ipstrutbox
\def\ipstrut{\copy\ipstrutbox}
\def\lip#1<#2,#3>{\mathopen{\relax_{\ipstrut\ipscriptstyle{
#1}}\lipsqueeze
\langle} #2\ipcomma #3 \rangle}
\def\blip#1<#2,#3>{\mathopen{\relax_{\ipstrut
\ipscriptstyle{ #1}}\lipsqueeze\bigl\langle} #2\ipcomma #3 \bigr\rangle}
\def\rip#1<#2,#3>{\langle #2\ipcomma #3
\rangle_{\ripsqueeze\ipstrut\ipscriptstyle{#1}}}
\def\brip#1<#2,#3>{\bigl\langle #2\ipcomma #3
\bigr\rangle_{\ripsqueeze\ipstrut\ipscriptstyle{#1}}}
\def\angsqueeze{\mskip -6mu}
\def\smangsqueeze{\mskip -3.7mu}
\def\trip#1<#2,#3>{\langle\smangsqueeze\langle #2\ipcomma #3
\rangle\smangsqueeze\rangle_{\ripsqueeze\ipstrut\ipscriptstyle{#1}}}
\def\btrip#1<#2,#3>{\bigl\langle\angsqueeze\bigl\langle #2\ipcomma
#3
\bigr\rangle
\angsqueeze\bigr\rangle_{\ripsqueeze\ipstrut\ipscriptstyle{#1}}}
\def\tlip#1<#2,#3>{\mathopen{\relax_{\ipstrut\ipscriptstyle{
#1}}\lipsqueeze \langle\smangsqueeze\langle} #2\ipcomma #3
\rangle\smangsqueeze\rangle}
\def\btlip#1<#2,#3>{\mathopen{\relax_{\ipstrut\ipscriptstyle{
#1}}\lipsqueeze
\bigl\langle\angsqueeze\bigl\langle} #2\ipcomma #3
\bigr\rangle\angsqueeze\bigr\rangle}
\def\ip(#1|#2){(#1\mid #2)}
\def\bip(#1|#2){\bigl(#1 \mid #2\bigr)}
\def\Bip(#1|#2){\Bigl( #1 \bigm| #2 \Bigr)}
\let\mathscr\mathcal}
\newcommand{\bundlefont}[1]{\mathscr{#1}}
\newcommand{\A}{\bundlefont A}
\newcommand{\B}{\bundlefont B}
\newcommand\CC{\bundlefont C}
\newcommand\E{\bundlefont E}
\newcommand{\go}{G^{(0)}}
\def\sa_#1(#2;#3){\Gamma_{#1}(#2;#3)}
\newcommand\prima{\Prim A}
\newcommand\clsp{\overline{\operatorname{span}}}
\newcommand\usc{up\-per semi\-con\-tin\-uous}
\newcommand\pb{\bar p}
\newcommand\Bb{\bundlefont Q}
\renewcommand\Bbb{Q}
\newcommand\I{\mathcal{I}}
\newcommand\gcgb{\sa_{c}(G;\B)}
\newcommand\gccgb{\sa_{cc}(G;\B)}
\newcommand\uH{\underline{H}}
\let\phi\varphi
\begin{document}
\title[Morita Equivalence for Fell Bundle \cs-algebras]{\boldmath A Classic 
Morita Equivalence Result for Fell Bundle \cs-algebras}

\author{Marius Ionescu}

\address{Department of Mathematics \\ University of Connecticut,
  Storrs, CT 06269-3009}

\email{ionescu@math.uconn.edu}
\curraddr{Department of Mathematics\\ Colgate University \\ Hamilton,
  New York 13346} 

\author{Dana P. Williams}
\address{Department of Mathematics \\ Dartmouth College \\ Hanover, NH
03755-3551}

\email{dana.williams@Dartmouth.edu}

\keywords{Morita equivalence, Fell bundles, transitive groupoids}

\subjclass{46L05, 46L55}

\thanks{This research was supported by the Edward Shapiro fund at
  Dartmouth College.}

\begin{abstract}
  We show how to extend a classic Morita Equivalence Result of Green's
  to the \cs-algebras of Fell bundles over transitive groupoids.
  Specifically, we show that if $p:\B\to G$ is a saturated
    Fell bundle over a transitive groupoid $G$ with stability group
    $H=G(u)$ at $u\in \go$, then $\cs(G,\B)$ is Morita equivalent to
    $\cs(H,\CC)$, where $\CC=\B\restr H$. As an application, we show
    that if $p:\B\to G$ is a Fell bundle over a group $G$ and if there
    is a continuous $G$-equivariant map $\sigma:\Prim A\to G/H$, where
    $A=B(e)$ is the \cs-algebra of $\B$ and $H$ is a closed subgroup,
    then $\cs(G,\B)$ is Morita equivalent to $\cs(H,\CC^{I})$ where
    $\CC^{I}$ is a Fell bundle over $H$ whose fibres are $A/I\sme
    A/I$-\ib s and $I=\bigcap\set{P:\sigma(P)=eH}$.  Green's result
    is a special case of our application to bundles over groups.
\end{abstract}

\maketitle
\section*{Introduction}
\label{sec:intro}

One of the many fundamental results in Green's seminal work
\cite{gre:am78} on \cs-dynamical systems is his theorem
(\cite{gre:am78}*{Theorem~17}) which says that if $(A,G,\alpha)$ is a
dynamical system and if $\sigma:\prima \to G/H$ is a continuous
$G$-equivariant map, then $A\rtimes_{\alpha}G$ is Morita equivalent to
$A/I\rtimes_{\alpha^{I}}H$, where
$I=\bigcap\set{P\in\prima:\sigma(P)=eH}$.  This result is of
particular importance in studying the Mackey machine for regular or
smooth crossed products --- see \cite{wil:crossed}*{Proposition
  8.7~and Theorem~8.16} --- and consequently is a basic component of
the Mackey machine in general.  Our first goal in this note
  is to show 
how Green's result can be formulated for Fell bundles where it is a
straightforward application of the Equivalence Theorem
\cite{muhwil:dm08}*{Theorem~6.4}.  However, recovering Green's
dynamical system version from the Fell bundle version is nontrivial.
Fortuitously, doing so
leads to an interesting application to Fell bundles over groups  which
is our second main result.

This note is a continuation of \cite{ionwil:xx09a}.  In particular, we
will refer to the first section of that paper for basic notation,
conventions and some fundamental facts about Fell bundles.  In
particular, in order that we can apply the Equivalence Theorem from
\cite{muhwil:dm08}, we are going to want to assume that our Fell
bundles are saturated and separable in the sense that the
underlying groupoids are second countable and the underlying Banach
bundles are separable.

\section{The Main Results}
\label{sec:main-results}

The first order of business is to formulate Green's theorem for Fell
bundles.  Then, as mentioned above, we have some work to do to extract
the ``group version''.

\subsection{The Fell Bundle Version}
\label{sec:fell-bundle-version}

\begin{thm}
  \label{thm-fell-me}
  Let $p:\B\to G$ be a separable saturated Fell bundle over a
  \emph{transitive} locally compact groupoid $G$.   If $u\in
  \go$ and if $H:=G(u)=\set{x\in G:s(x)=u=r(x)}$ is the stability
  group at $u$, then $\CC=p^{-1}(H)$ is
  a Fell bundle over $H$, and $\cs(G,\B)$ is
  Morita equivalent to $\cs(H,\CC)$.
\end{thm}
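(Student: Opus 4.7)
The plan is to apply the Muhly--Williams Equivalence Theorem \cite{muhwil:dm08}*{Theorem~6.4} by exhibiting a $(\B,\CC)$-equivalence Fell bundle. The natural candidate for the underlying $(G,H)$-equivalence is $X := s^{-1}(u)$, with $G$ acting on the left and $H$ on the right by (partial) groupoid multiplication. My first step is to verify that this is a free and proper $(G,H)$-equivalence: freeness and properness are immediate from the groupoid axioms; the range map descends to a homeomorphism $X/H \cong \go$; and because $G$ is transitive any two points of $X$ are $G$-related, so $G\backslash X$ collapses to the single point $\{u\} = H^{(0)}$.

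Next I take $\E := p^{-1}(X) = \B\restr X$ as an upper semicontinuous Banach bundle over $X$, with left $\B$- and right $\CC$-actions inherited from multiplication in $\B$: for $b\in B(g)$ and $e\in E(x)=B(x)$ with $s(g)=r(x)$, the product $be\in B(gx)$ defines the action, and analogously on the right. The two inner products are defined directly from the bundle operations,
\[
\lip{\B}<e,f> := ef^{*}\in B(xy^{-1}), \qquad \rip{\CC}<e,f> := e^{*}f\in B(x^{-1}y),
\]
for $e\in B(x)$, $f\in B(y)$; the right one is nonzero only when $r(x)=r(y)$, in which case $x^{-1}y\in H$. Sesquilinearity, positivity, the compatibility identity $\lip{\B}<e,f>\cdot g = e\cdot\rip{\CC}<f,g>$, and continuity of the actions and inner products are all immediate from the corresponding properties of $\B$.

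The substantive step is \emph{fullness} of the two inner products, which is where transitivity of $G$ and saturation of $\B$ really enter. As $x,y$ range over $X$, the element $xy^{-1}$ sweeps out all of $G$ (given any $g\in G$, pick $y\in X$ with $r(y)=s(g)$, possible by transitivity, then $x=gy$ gives $xy^{-1}=g$); and then the closed span of the products $B(x)B(y)^{*} = B(x)B(y^{-1})$ is exactly $B(xy^{-1})$ by saturation. The argument for the $\CC$-valued inner product is identical, with $x^{-1}y$ sweeping out all of $H$ as $x,y$ vary with common range.

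With the $(\B,\CC)$-equivalence constructed, Theorem~6.4 of \cite{muhwil:dm08} gives the Morita equivalence $\cs(G,\B)\sme\cs(H,\CC)$ at once. The main obstacle I expect is not algebraic but topological: one has to verify that the restriction $\E=\B\restr X$ really is a separable upper semicontinuous Banach bundle in its own right and that the actions and inner products are jointly continuous in the sense required by \cite{muhwil:dm08}. These checks are essentially routine since everything is restricted from structure already present on $\B$, but they require some care with the topology of the fibre-product spaces over which the actions are defined.
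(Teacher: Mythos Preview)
Your approach is exactly the paper's: build the $(\B,\CC)$-equivalence $\E=\B\restr{G_{u}}$ over the $(G,H)$-equivalence $G_{u}=s^{-1}(u)$ and invoke \cite{muhwil:dm08}*{Theorem~6.4}. Two remarks.

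First, a genuine gap. You assert that the range map descends to a homeomorphism $X/H\cong\go$, but this is precisely the point that can fail without second countability (consider $\R_{\text{disc}}$ acting by translation on $\R$ with the usual topology). What you need is that $r\restr{G_{u}}:G_{u}\to\go$ is open, and this is a nontrivial theorem for transitive second countable groupoids: see \cite{mrw:jot87}*{Theorem~2.2B} or Ramsay \cite{ram:jfa90}*{Theorem~2.1}. This is where the separability hypothesis on $\B$ (hence second countability of $G$) is actually used, and the paper flags it explicitly in Remark~\ref{rem-sep}.

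Second, your ``fullness'' paragraph is aimed at the wrong target. The definition of a $(\B,\CC)$-equivalence in \cite{muhwil:dm08}*{Definition~6.1} does not ask that $\lip\B<\cdot,\cdot>$ sweep out every fibre of $\B$ as $x,y$ range over $X$; axiom~(c) is the purely \emph{fibrewise} requirement that each $E(x)=B(x)$ be an $A(r(x))\sme A(u)$-\ib. That holds immediately because $\B$ is saturated, with no need to invoke transitivity of $G$ again. Your sweeping argument is correct but superfluous here; transitivity enters only in making $G_{u}$ a $(G,H)$-equivalence of spaces, not in the bundle-level axioms.
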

\begin{remark}
  \label{rem-sep}
  Since we have assumed that $p:\B\to G$ is \emph{separable}, this
  implies that $G$ is second countable.  Therefore $G_{u}=s^{-1}(u)$
  is a $(G,H)$-equivalence under the hypotheses of
  Theorem~\ref{thm-fell-me} (see
  \cite{mrw:jot87}*{Theorem~2.2B}).\footnote{The only issue in showing
    that $G_{u}$ is an equivalence is to prove that
    $r_{G}\restr{G_{u}}$ is an open map.  (Note that this can fail if
    $G$ is not second countable: consider $\R$ with the discrete
    topology acting on $\R$ with the usual topology by translation.)
    The point of \cite{mrw:jot87}*{Theorem~2.2B} is to see that this
    map is always open if $G$ is second countable.  Nowadays, a better
    reference for this is Ramsay's \cite{ram:jfa90}*{Theorem~2.1}.}
  In particular, $\cs(G)$ and $\cs(H)$ are Morita equivalent by the
  Equivalence Theorem for groupoids (\cite{mrw:jot87}*{Theorem~2.8}).
\end{remark}

\begin{proof}
  Let $\CC=p^{-1}(H)$ and $p_{H}=p\restr{\CC}$.  Then it is
  straightforward to see that $p_{H}:\CC\to H$ is a Fell bundle over
  $H$.  Let $\E=p^{-1}(G_{u})$ and $q=p\restr {\E}$.  We will show
  that $q:\E\to G_{u}$ is a $(\B,\CC)$-equivalence as in
  \cite{muhwil:dm08}*{Definition~6.1}.  Then Theorem~\ref{thm-fell-me}
  will follow from the Equivalence Theorem
  \cite{muhwil:dm08}*{Theorem~6.4}.

  Since $G_{u}$ is a $(G,H)$-equivalence (Remark~\ref{rem-sep}) and
  $q:\E\to G_{u}$ is clearly an \usc\ Banach bundle, we just need to
  verify axioms (a), (b) and (c) of
  \cite{muhwil:dm08}*{Definition~6.1}.  To do this, first observe that
  $\B$ and $\CC$ act on the left and right, respectively, on $\E$ via
  restriction of the multiplication in $\B^{(2)}$; then the axioms
  (a), (b) and (c) for an action given in the second paragraph of
  \cite{muhwil:dm08}*{\S6} are clearly satisfied.\footnote{Notice that
    there is a typo in axiom~(c): it should read $\|b\cdot e\|\le
    \|b\|\|e\|$.  Of course, this follows from
    \cite{ionwil:xx09a}*{Lemma~1}  in our case.}  Then axiom~(a)
  of \cite{muhwil:dm08}*{Definition~6.1} follows from the
  associativity of multiplication in $\B^{(2)}$.

  For axiom (b) of \cite{muhwil:dm08}*{Definition~6.1}, we define
  $\lip\B<\cdot, \cdot>:\E*_{s}\E\to \B$ by
  \begin{equation*}
    \lip\B<b,c>=bc^{*},
  \end{equation*}
  and $\rip\CC<b,c>:\E*_{r}\E\to \CC$ by
  \begin{equation*}
    \rip\CC<b,c>=b^{*}c.
  \end{equation*}
  Then it is not hard to check that properties (i)--(iv) hold.  For
  example, if $(b,c)\in\E*_{s}\E$, then
  $[q(b),q(c)]=q(b)q(c)^{-1}=p(bc^{*})$.

  As for axiom~(c), let
  $A=\sa_{0}(\go;\B)$ the \cs-algebra of $\B$ over $\go$.
  We are given that $E(x)=B(x)$ is a $A(r(x))\sme
  A(u)$-\ib.  But $A(u)$ is the \cs-algebra of $\CC$ over $H^{(0)}=u$.
  Thus, (c) holds and $q:\E\to G_{u}$ is an
  equivalence.\footnote{There is, sadly, also a misprint in part~(c)
    of \cite{muhwil:dm08}*{Definition~6.1}: it should read that ``each
    $E(t)$ is a $B(r(t))\sme C(s(t))$-\ib.''}  This completes the
  proof.
\end{proof}

\subsection{The Group Version}
\label{sec:group-version}

It is hardly obvious that Green's Theorem for \cs-dynamical systems is
a consequence of Theorem~\ref{thm-fell-me}. In fact, showing this
requires a fair bit of gymnastics.  We will obtain Green's
  result as a special case of a result for Fell bundles over groups
  (Theorem~\ref{thm-grp-thm}) that is of considerable interest in its
own right.

Let $p:\B\to G$ be a Fell bundle over a locally compact
\emph{group}. (Note that this implies that the underlying Banach
bundle is continuous rather than merely upper semicontinuous --- see
\cite{ionwil:xx09a}*{Remark~3}.)  Suppose that $\sigma:\prima\to G/H$
is an equivariant map, where $A$ is the \cs-algebra $A(e)$ of $\B$
over $G^{(0)}=\sset e$, and $H$ is a closed subgroup of $G$.  We let
\begin{equation}\label{eq:20}
  I:=\bigcap \set{P\in\prima:\sigma(P)=eH}.
\end{equation}
As in Theorem~\ref{thm-fell-me}, we let $\CC=p^{-1}(H)$.  Then $I$ is
a $H$-invariant ideal in the \cs-algebra $A=A(e)$ of $\CC$.  We adopt
the notations and constructions of \cite{ionwil:xx09a}*{\S3.1}\
applied to the Fell bundle $p\restr{\CC}:\CC\to G$ and the invariant
ideal $I$.  In particular, for each $h\in H$, $C(h)$ is an $A\sme
A$-\ib, and $C_{I}(h):=C(h)\cdot I$ is an $I\sme I$-\ib\ by
\cite{ionwil:xx09a}*{Lemma~10 and Proposition~14}.  Thus by
\cite{rw:morita}*{Proposition~3.25}, the quotient
$C^{I}(h):=C(h)/C_{I}(h)$ is an $A/I\sme A/I$-\ib, and by
\cite{ionwil:xx09a}*{Proposition~15}, $\CC^{I}:=\coprod_{h\in
  H}C^{I}(h)$ has a natural topology making it into a Fell bundle over
$H$ with the operations induced from $\CC$.

\begin{thm}
  \label{thm-grp-thm}
  Let $p:\B\to G$ be a separable saturated Fell bundle over a locally
  compact group $G$ such that there is a continuous $G$-equivariant
  map $\sigma:\prima \to G/H$, where $A$ is the \cs-algebra of $\B$
  and $H$ is a closed subgroup of $G$.  If $I$ is the ideal of $A$
  given in \eqref{eq:20}, and if $\CC$ is the Fell bundle $p^{-1}(H)$
  as above, then $\cs(G,\B)$ is Morita equivalent to $\cs(H,\CC^{I})$
\end{thm}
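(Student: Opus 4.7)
The plan is to reduce to Theorem~\ref{thm-fell-me} by building a Fell bundle over the transformation groupoid $\Gamma := G\ltimes G/H$. This groupoid is separable (since $G$ is), transitive (since $G$ acts transitively on $G/H$), and its stability group at the unit $eH$ is $\{(h,eH):h\in H\}\cong H$.

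For each $xH\in G/H$ set $I_{xH}:=\bigcap\set{P\in\prima:\sigma(P)=xH}$, so that $I_{eH}=I$. The $G$-equivariance of $\sigma$ gives $I_{xH}=x\cdot I$, and the Rieffel correspondence for the $A\sme A$-\ib\ $B(g)$ implements the $G$-action on $\prima$, so for every $(g,xH)\in\Gamma$ one has
\begin{equation*}
I_{gxH}\cdot B(g)=B(g)\cdot I_{xH}.
\end{equation*}
Using this compatibility I would define
\begin{equation*}
D(g,xH):=B(g)\bigl/\bigl(B(g)\cdot I_{xH}\bigr),
\end{equation*}
which inherits a natural $A/I_{gxH}\sme A/I_{xH}$-\ib\ structure. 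The multiplication and involution on $\B^{(2)}$ descend to give $\D:=\coprod_{(g,xH)\in\Gamma}D(g,xH)$ the structure of a separable saturated Fell bundle over $\Gamma$; continuity of $\sigma$ together with the quotient machinery of \cite{ionwil:xx09a}*{Proposition~15}, applied fibrewise over $G/H$, supplies the required upper semicontinuous Banach bundle topology. By construction $D(h,eH)=B(h)/B(h)\cdot I=C^I(h)$ for $h\in H$, so $\D$ restricted to the stability group is exactly $\CC^I$, and Theorem~\ref{thm-fell-me} applied to $\D$ yields that $\cs(\Gamma,\D)$ is Morita equivalent to $\cs(H,\CC^I)$.

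The remaining step, which I expect to be the main obstacle, is to identify $\cs(G,\B)$ with $\cs(\Gamma,\D)$. I would approach this representation-theoretically. The map $\sigma$ endows $A$, and hence $\cs(G,\B)$, with a canonical $C_0(G/H)$-algebra structure, so every non-degenerate representation $(\pi,V,\H)$ of $\B$ carries a spectral measure on $G/H$; the Disintegration Theorem then writes $\H=\int^{\oplus}\H_{xH}\,d\mu$ with each $\pi_{xH}$ annihilating $I_{xH}$ and each $V(g)$ restricting to an isometry $\H_{xH}\to\H_{gxH}$. This is precisely the data of a non-degenerate representation of $\D$ over $\Gamma$, and the construction is reversible, producing a bijection of non-degenerate representations and hence an isomorphism of the universal enveloping $\cs$-algebras. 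Combining this with the Morita equivalence between $\cs(\Gamma,\D)$ and $\cs(H,\CC^I)$ established above completes the proof.
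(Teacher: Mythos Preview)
Your overall architecture matches the paper's exactly: build a Fell bundle over the transformation groupoid $G\times G/H$, identify its restriction to the stability group with $\CC^{I}$, apply Theorem~\ref{thm-fell-me}, and then identify the two Fell-bundle $\cs$-algebras. Your fibres $D(g,xH)=B(g)/B(g)\cdot I_{xH}$ are precisely the paper's $\Bbb(x,yH)$ (up to a source/range convention for the transformation groupoid), and the compatibility $I_{gxH}\cdot B(g)=B(g)\cdot I_{xH}$ is the paper's Corollary~\ref{cor-I-switching}. The paper, however, spends Lemmas~\ref{lem-key-bb} and~\ref{lem-gghaction} establishing the \usc-bundle topology via sections $\Phi(f)(x,yH):=[f(x)]$; your appeal to \cite{ionwil:xx09a}*{Proposition~15} ``fibrewise'' hides exactly this work.

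Where you genuinely diverge is the identification $\cs(G,\B)\cong\cs(\Gamma,\D)$. The paper writes down an explicit $*$-isomorphism $\Phi:\gccgb\to\sa_{c}(G\times G/H;\Bb)$ at the section level, checks it is a $*$-map and a bijection, and then uses faithful representations only to verify that $\Phi$ and $\Phi^{-1}$ are bounded for the universal norms (Proposition~\ref{prop-phi-iso}). Your disintegration approach has two soft spots. First, $\cs(G,\B)$ is \emph{not} a $C_{0}(G/H)$-algebra in the usual sense, since the image of $C_{0}(G/H)$ in $M(\cs(G,\B))$ is not central; only $A$ is a $C_{0}(G/H)$-algebra, so the spectral measure lives on the $A$-representation and must be shown to interact correctly with the $B(g)$-operators. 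Second, and more seriously, a bijection of non-degenerate representations does not by itself yield a $\cs$-isomorphism: you still need a concrete $*$-map between dense subalgebras that the representation bijection intertwines, which brings you back to writing down something like the paper's $\Phi$. The disintegration also produces Borel Hilbert bundles and measurable data, whereas a representation of $\D$ in the sense of \cite{muhwil:dm08} is exactly such data---so the idea can be made to work, but the passage from ``same representations'' to ``isomorphic $\cs$-algebras'' needs the explicit section-level map as the anchor.
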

\begin{remark}
  \label{rem-green-at-last}
  Notice that if $\B=A\times G$ is the Fell bundle associated to the
  dynamical system $(A,G,\alpha)$, then $\CC^{I}$ is the Fell bundle
  $A/I\times H$ associated to $(A/I,H,\alpha^{I})$.  Therefore Green's
  Theorem is a special case of Theorem~\ref{thm-grp-thm}.
\end{remark}

% To prove Theorem~\ref{thm-grp-thm} we want to appeal to
% Theorem~\ref{thm-fell-me}.  To do this, we first need to see that
% $\cs(G,\B)$ is isomorphic to the \cs-algebra of a Fell bundle over a
% \emph{transitive} groupoid.  Establishing this isomorphism requires a
% bit of work which we do in the next section.  Once we have the
% isomorphism, the proof of Theorem~\ref{thm-grp-thm} follows easily as
% we explain in Section~\ref{sec:proof-theorem}.

To prove Theorem~\ref{thm-grp-thm} we want to appeal to
Theorem~\ref{thm-fell-me}.  We do this by first building a
  Fell bundle $\Bb$ over the transitive transformation groupoid
  $G\times G/H$ such that if $u=(e,eH)\in (G\times G/H)^{(0)}$ and if
  $\uH=\set{(h,eH):h\in H}$ is the stability group at $u$, then
  $\Bb\restr{\uH}$, which we view as a Fell bundle over $H$, is naturally
  identified with $\CC^{I}$ (see Proposition~\ref{prop-fb1}). Since
  Theorem~\ref{thm-fell-me} implies $\cs(G\times G/H,\Bb)$ is Morita
  equivalent to $\CC^{I}$, we complete the proof of
  Theorem~\ref{thm-grp-thm} by showing that $\cs(G\times G/H,\Bb)$ is
  isomorphic to $\cs(G,\B)$.  We do this in the next section as
  Proposition~\ref{prop-phi-iso}.

 \section{The isomorphism}
 \label{sec:isomorphism}

 Suppose that we are given a Fell bundle $p:\B\to G$ over a locally
 compact \emph{group} $G$.  Let $A=B(e)$ be the \cs-algebra over $e$.
 In \cite{ionwil:xx09a}*{Proposition~9}, we showed that $\prima$ is a
 $G$-space.  Suppose that there is a $G$-equivariant map
 $\sigma:\prima\to G/H$ for a closed subgroup $H$ of $G$.  In this
 section, we want to show that there is a naturally associated Fell
 bundle $\pb:\Bb\to G\times G/H$ over the transformation groupoid
 $G\times G/H$ such that $\cs(G,\B)$ and $\cs(G\times G/H,\Bb)$ are
 isomorphic (Proposition~\ref{prop-phi-iso}).

 \begin{lemma}
   \label{lem-I-ideals}
   Let $\sigma:\prima\to G/H$ be a continuous $G$-equivariant map as
   above.  Then $A$ is a $C_{0}(G/H)$-algebra (as in
   \cite{wil:crossed}*{Proposition~C.5}) and $A(xH)=A/I(xH)$, where
   \begin{equation*}
     I(xH):=\bigcap \set{P\in\prima:\sigma(P)=xH}.
   \end{equation*}
 \end{lemma}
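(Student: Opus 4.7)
The plan is to build the $C_0(G/H)$-algebra structure on $A$ from $\sigma$ via the Dauns--Hofmann theorem, and then to read off the fibres by a primitive-ideal argument. Recall that Dauns--Hofmann identifies $ZM(A)$ with $C^{b}(\prima)$ in such a way that for $z\in ZM(A)$ corresponding to $\phi\in C^{b}(\prima)$ and any $a\in A$, $P\in\prima$, one has $za+P=\phi(P)(a+P)$ in $A/P$. Define
\[
  \Phi:C_{0}(G/H)\to ZM(A),\qquad \Phi(f)\longleftrightarrow f\circ\sigma.
\]
Since $\sigma$ is continuous and $G/H$ is Hausdorff, $f\circ\sigma$ is bounded and continuous on $\prima$, so $\Phi$ is a well-defined $*$-homomorphism.

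The main point is to check that $\Phi$ is nondegenerate, that is, that the closed ideal $K$ of $A$ generated by $\Phi(C_{0}(G/H))\cdot A$ equals $A$. Using the Dauns--Hofmann formula, for any $f\in C_{0}(G/H)$, $a\in A$ and $P\in\prima$, $\Phi(f)a+P=f(\sigma(P))(a+P)$. Hence $\Phi(C_{0}(G/H))\cdot A\subset P$ iff $f(\sigma(P))=0$ for every $f\in C_{0}(G/H)$, which is impossible because $\sigma(P)\in G/H$ and $C_{0}(G/H)$ separates points from infinity. Therefore no primitive ideal contains $K$, and since a closed two-sided ideal of a \cs-algebra equals the intersection of the primitive ideals containing it, $K=A$. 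This gives the $C_{0}(G/H)$-algebra structure and matches \cite{wil:crossed}*{Proposition~C.5}.

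For the fibre identification, let $J_{xH}=\set{f\in C_{0}(G/H):f(xH)=0}$, so by definition $A(xH)=A/\overline{\Phi(J_{xH})\cdot A}$. The same Dauns--Hofmann computation shows that a primitive ideal $P$ contains $\Phi(J_{xH})\cdot A$ iff $f(\sigma(P))=0$ for every $f\in J_{xH}$, and because $J_{xH}$ separates $xH$ from every other point of $G/H$, this happens iff $\sigma(P)=xH$. Applying once more the fact that any closed ideal is the intersection of the primitive ideals that contain it, we conclude
\[
  \overline{\Phi(J_{xH})\cdot A}=\bigcap\set{P\in\prima:\sigma(P)=xH}=I(xH),
\]
so $A(xH)=A/I(xH)$, as required.

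The only real subtlety is the nondegeneracy step; everything else is a direct translation through Dauns--Hofmann. I do not anticipate a genuine obstacle, since continuity of $\sigma$ together with the local compactness of $G/H$ supplies, for each $P$, enough functions $f\in C_{0}(G/H)$ with $f(\sigma(P))\neq0$ to rule out any primitive ideal containing $K$.
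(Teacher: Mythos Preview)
Your argument is correct and follows the same route as the paper: the paper invokes \cite{wil:crossed}*{Proposition~C.5} (which is precisely the Dauns--Hofmann construction you spell out) to obtain the $C_{0}(G/H)$-structure with fibres $A/J_{xH}$, and then uses the identical computation $(\phi\cdot a)(P)=\phi(\sigma(P))\,a(P)$ to show that $P\supset J_{xH}$ if and only if $\sigma(P)=xH$, whence $J_{xH}=I(xH)$. The only difference is that you have unpacked the nondegeneracy step explicitly rather than citing it.
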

 \begin{proof}
   By \cite{wil:crossed}*{Proposition~C.5} and preceding discussion,
   $A$ is a $C_{0}(G/H)$-algebra with fibres $A(xH)=A/J_{xH}$, where
   \begin{equation*}
     J_{xH}=\clsp\set{\phi\cdot a:\text{$a\in A$, $\phi\in C_{0}(G/H)$
         and $\phi(xH)=0$}}.
   \end{equation*}
   Thus, we just need to confirm that $J_{xH}=I(xH)$.  However, if
   $a(P)$ denotes the image of $a\in A$ in the quotient $A/P$, then
   for all $P\in \prima$,
   \begin{equation}
     \label{eq:9}
     (\phi\cdot a)(P)=\phi\bigl(\sigma(P)\bigr) a(P)
   \end{equation}
   (see the discussion preceding
   \cite{wil:crossed}*{Proposition~C.5}).  If $P\supset J_{xH}$, then
   since the left-hand side of \eqref{eq:9} vanishes for all $a\in A$
   and $\phi\in C_{0}(G/H)$ with $\phi(xH)=0$, we see that
   $\sigma(P)=xH$.  On the other hand, if $\sigma(P)=xH$, then using
   \eqref{eq:9}, we see that $J_{xH}\subset P$.  Therefore
   $J_{xH}=I(xH)$ as required.
 \end{proof}

 \begin{cor}
   \label{cor-hxmap}
   For each $x\in G$, let $h_{x}:\I(A)\to\I(A)$ be the Rieffel
   homeomorphism $\ibind{B(x)}$ induced by the \ib\ $B(x)$ (see
   \cite{rw:morita}*{Proposition~3.24}).  Then
   $h_{x}\bigl(I(yH)\bigr)=I(xyH)$.
 \end{cor}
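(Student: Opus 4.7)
The plan is to leverage the fact, established as \cite{ionwil:xx09a}*{Proposition~9}, that the $G$-action on $\prima$ is implemented by exactly the Rieffel homeomorphisms coming from the bimodules $B(x)$, so that $x\cdot P = h_{x}(P)$ for all $P\in\prima$ and $x\in G$. Once this identification is in hand, the corollary becomes a routine manipulation: $G$-equivariance of $\sigma$ translates directly into the desired relation between $h_{x}$ and the fibres $I(\cdot)$.

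First, I would recall that the Rieffel correspondence $h_{x}=\ibind{B(x)}$ is a lattice isomorphism on the ideal space $\I(A)$ which preserves arbitrary intersections. Applying this to the defining intersection in Lemma~\ref{lem-I-ideals} yields
\begin{equation*}
  h_{x}\bigl(I(yH)\bigr)
  = h_{x}\Bigl(\bigcap\set{P\in\prima:\sigma(P)=yH}\Bigr)
  = \bigcap\set{h_{x}(P):P\in\prima,\ \sigma(P)=yH}.
\end{equation*}

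Next, I would use $G$-equivariance: since $h_{x}(P)=x\cdot P$, equivariance of $\sigma$ gives $\sigma(h_{x}(P))=x\sigma(P)$. Hence if $\sigma(P)=yH$ then $\sigma(h_{x}(P))=xyH$, and conversely, since the assignment $P\mapsto h_{x}(P)$ is a bijection of $\prima$ with inverse $h_{x^{-1}}$ (because $P\mapsto x\cdot P$ is a group action), every $Q\in\prima$ with $\sigma(Q)=xyH$ is of the form $h_{x}(P)$ for some $P$ with $\sigma(P)=yH$. Thus the indexed family in the last displayed intersection is precisely $\set{Q\in\prima:\sigma(Q)=xyH}$, and the intersection equals $I(xyH)$.

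I expect no substantial obstacle here beyond correctly invoking \cite{ionwil:xx09a}*{Proposition~9} to identify the $G$-action on $\prima$ with the collection of Rieffel homeomorphisms $\set{h_{x}}_{x\in G}$; that identification is the whole content of the argument, since once it is in place both the preservation of intersections under $h_{x}$ and the equivariance of $\sigma$ are standard.
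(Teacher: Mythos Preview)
Your argument is correct and follows essentially the same route as the paper: both use that the $G$-action on $\prima$ is given by $x\cdot P=h_{x}(P)$ (so equivariance reads $\sigma(h_{x}(P))=x\sigma(P)$) together with the fact that $h_{x}$ is an order isomorphism of $\I(A)$ restricting to a bijection of $\prima$, and then reindex the defining intersection. The only cosmetic difference is that you invoke preservation of arbitrary intersections directly, whereas the paper first uses continuity of $\sigma$ to identify $\sigma^{-1}(yH)$ with the hull of $I(yH)$ before passing through $h_{x}$; your formulation is slightly more economical since it does not need that step.
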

 \begin{proof}
   Since $\sigma$ is continuous, $\sigma^{-1}(yH)$ is closed in
   $\prima$.  In particular, $P\supset I(yH)$ if and only if
   $\sigma(P)=yH$.  Since $h_{x}$ is containment preserving and has
   inverse $h_{x^{-1}}$ by \cite{rw:morita}*{Theorem~3.29}, $P\supset
   I(yH)$ if and only if $h_{x}(P)\supset h_{x}\bigl(I(yH)\bigr)$.
   But equivariance means that $\sigma\bigl(h_{x}(P)\bigr) =x\cdot
   \sigma(P)$.  Thus
   \begin{align*}
     h_{x}\bigl(I(yH)\bigr) &= \bigcap\set{h_{x}(P):\sigma(P)=yH}
     = \bigcap \set{P:\sigma(P)=xyH} \\
     &= I(xyH).\qedhere
   \end{align*}
 \end{proof}

 \begin{remark}
   \label{rem-cohen}
   As noted in \cite{ionwil:xx09a}*{Remark~5}, if $X$ is a
   $A\sme B$-\ib\ and $J$ is an ideal in $A$, then the Cohen
   Factorization Theorem implies that
   \begin{equation*}
     \clsp\set{a\cdot x:\text{$a\in J$ and $x\in X$}}=\set{a\cdot
       x:\text{$a\in J$ and $x\in X$}}.
   \end{equation*}
   Consequently, we write simply $J\cdot X$ for the above $A\sme
   B$-submodule.  Similarly, we'll write $X\cdot I$ for the
   corresponding $A\sme B$-submodule when $I$ is an ideal in $B$.
 \end{remark}

 For each $x\in G$, $B(x)$ is an $A\sme A$-\ib.  Since $I(yH)$ and
 $I(xyH)$ are matched up by the Rieffel correspondence, the following
 is a consequence of basic Morita theory (see
 \cite{rw:morita}*{Propositions 3.24~and 3.25}).
 \begin{cor}
   \label{cor-I-switching}
   Let $x,y\in G$.  Then
   \begin{enumerate}
   \item $B(x)\cdot I(yH)=I(xyH)\cdot B(x)$,
   \item $B(x) \cdot I(yH)$ is a $I(xyH)\sme I(yH)$-\ib,
     and
   \item $B(x)/B(x)\cdot I(yH)$ is $A(xyH)\sme
     A(yH)$-\ib.
   \end{enumerate}
 \end{cor}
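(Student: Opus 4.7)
The plan is to deduce all three parts by directly applying the Rieffel correspondence machinery (Propositions 3.24 and 3.25 of \cite{rw:morita}) to the $A\sme A$-\ib\ $B(x)$, using Corollary~\ref{cor-hxmap} as the crucial input that identifies which ideals are matched. Recall that since $p:\B\to G$ is saturated, $B(x)$ is an $A\sme A$-\ib\ with associated Rieffel homeomorphism $h_{x}=\ibind{B(x)}$.

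For part~(a), I would invoke the standard fact (Proposition~3.24 of \cite{rw:morita}) that for an $A\sme B$-\ib\ $X$, if $J\subset A$ and $K\subset B$ correspond under the Rieffel correspondence, then $J\cdot X=X\cdot K$. Applied to $X=B(x)$ with $K=I(yH)$ and $J=h_{x}(I(yH))$, Corollary~\ref{cor-hxmap} identifies $J=I(xyH)$, yielding $I(xyH)\cdot B(x)=B(x)\cdot I(yH)$. Remark~\ref{rem-cohen} justifies writing these products without closures.

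For part~(b), Proposition~3.25 of \cite{rw:morita} states that this common submodule $J\cdot X=X\cdot K$ is naturally a $J\sme K$-\ib\ under the inherited actions and inner products; applying it gives that $B(x)\cdot I(yH)$ is an $I(xyH)\sme I(yH)$-\ib. For part~(c), the corresponding quotient statement in \cite{rw:morita}*{Proposition~3.25} gives that $B(x)/B(x)\cdot I(yH)$ is an $A/I(xyH)\sme A/I(yH)$-\ib, and Lemma~\ref{lem-I-ideals} then identifies these quotients as $A(xyH)\sme A(yH)$.

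Because Corollary~\ref{cor-hxmap} has already done the real work of matching the ideals, no part of this argument should present any genuine obstacle; the proof is essentially a three-line citation of \cite{rw:morita}*{Propositions~3.24 and~3.25} together with the fibre identification from Lemma~\ref{lem-I-ideals}. The only minor care required is to note that the Rieffel homeomorphism of \cite{rw:morita} is defined on closed two-sided ideals and preserves intersections, which is implicit in the use of $h_{x}$ on the ideal $I(yH)$ defined as an intersection of primitives.
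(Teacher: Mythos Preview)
Your proposal is correct and matches the paper's own reasoning: the paper simply remarks that, since $B(x)$ is an $A\sme A$-\ib\ and Corollary~\ref{cor-hxmap} shows $I(yH)$ and $I(xyH)$ correspond under the Rieffel correspondence, all three parts follow from \cite{rw:morita}*{Propositions~3.24 and~3.25}. Your additional pointers to Remark~\ref{rem-cohen} and Lemma~\ref{lem-I-ideals} are appropriate elaborations of exactly that citation.
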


 It follows immediately from Corollary~\ref{cor-I-switching} that the
 \emph{Banach space}
 \begin{equation*}
   % \label{eq:10}
   \Bbb(x,yH):=B(x)/B(x)\cdot I(x^{-1}yH) = B(x)/I(yH)\cdot B(x)
 \end{equation*}
 is an $A(yH)\sme A(x^{-1}yH)$-\ib.  Note that the $A(yH)$-valued
 inner product on $\Bbb(x,yH)$ is given by taking the appropriate
 quotient of the left $A$-valued inner product on $B(x)$ which is
 given by the Fell bundle multiplication. Thus
 \begin{equation}
   \label{eq:12}
   \blip A(yH)< [b],[c]>= b c^{*}(yH),
 \end{equation}
 where $[b]$ denotes the image of $b\in B(x)$ in $\Bbb(x,yH)$, and
 $bc^{*}(yH)$ is the image of $bc^{*}$ in $A(yH)$.

 Let
 \begin{equation*}
   % \label{eq:11}
   \Bb:=\coprod_{(x,yH)\in G\times G/H}\Bbb(x,yH)
 \end{equation*}
 and $\pb:\Bb\to G\times G/H$ be the associated bundle.  Naturally, we
 want to equip $\Bb$ with a topology and operations making it into a
 Fell bundle over the transformation groupoid $G\times G/H$.  This
 will take a bit of work and will be accomplished in
 Proposition~\ref{prop-fb1}.

 If $f\in\sa_{c}(G;\B)$, then let $\Phi(f)(x,yH)$ denote the image of
 $f(x)$ in $\Bbb(x,yH)$.

 \begin{lemma}
   \label{lem-key-bb}
   If $f\in \gcgb$, then $(x,yH)\mapsto
   \bigl\|\Phi(f)(x,yH)\bigr\|$ is upper
   semicontinuous and vanishes at infinity on $G\times G/H$.
 \end{lemma}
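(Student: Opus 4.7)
The plan is to rewrite $\|\Phi(f)(x,yH)\|^{2}$ as a fibre-norm in the $C_{0}(G/H)$-algebra $A$ supplied by Lemma~\ref{lem-I-ideals}, and then deduce both upper semicontinuity and vanishing at infinity from standard facts about such algebras.

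First I will use the inner-product formula \eqref{eq:12}: for any $b\in B(x)$ one has $\|[b]\|^{2}_{\Bbb(x,yH)}=\|(bb^{*})(yH)\|_{A(yH)}$, where $(bb^{*})(yH)$ denotes the image in $A(yH)=A/I(yH)$ of the element $bb^{*}\in A$. Taking $b=f(x)$ gives $\|\Phi(f)(x,yH)\|^{2}=\|(f(x)f(x)^{*})(yH)\|$. The map $\phi\colon G\to A$ defined by $\phi(x)=f(x)f(x)^{*}$ is continuous (by continuity of multiplication and involution in $\B$) and is supported in $\supp f$; so it suffices to prove that $(x,yH)\mapsto\|\phi(x)(yH)\|$ is upper semicontinuous and vanishes at infinity on $G\times G/H$.

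For upper semicontinuity, along any net $(x_{\alpha},y_{\alpha}H)\to(x,yH)$ I will use the estimate
\begin{equation*}
\|\phi(x_{\alpha})(y_{\alpha}H)\|\le\|\phi(x_{\alpha})-\phi(x)\|+\|\phi(x)(y_{\alpha}H)\|.
\end{equation*}
The first term tends to $0$ by continuity of $\phi$, and $\limsup_{\alpha}\|\phi(x)(y_{\alpha}H)\|\le\|\phi(x)(yH)\|$ is the standard upper semicontinuity of the fibre-norm of a fixed element of a $C_{0}(G/H)$-algebra.

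For vanishing at infinity, let $K=\supp f$, so $\phi$ is zero off $K$. Given $\delta>0$, I will exploit the compactness of $\phi(K)\subset A$ to cover it by finitely many open balls of radius $\delta/2$ with centres $a_{1},\dots,a_{n}\in A$. For each $i$, the function $yH\mapsto\|a_{i}(yH)\|$ is upper semicontinuous and vanishes at infinity on $G/H$, so there is a compact $L_{i}\subset G/H$ outside which $\|a_{i}(yH)\|<\delta/2$. Setting $L=L_{1}\cup\dots\cup L_{n}$, the triangle inequality gives $\|\phi(x)(yH)\|<\delta$ whenever $x\in K$ and $yH\notin L$, placing $\{(x,yH):\|\phi(x)(yH)\|\ge\delta\}$ inside the compact set $K\times L$. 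The main obstacle is precisely this last step: the pointwise vanishing at infinity of $yH\mapsto\|\phi(x)(yH)\|$ must be made uniform in $x\in K$, and this is what the compactness of $\phi(K)$ delivers.
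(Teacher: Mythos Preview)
Your proof is correct and shares the paper's opening reduction: using \eqref{eq:12} to rewrite $\|\Phi(f)(x,yH)\|^{2}=\|f(x)f(x)^{*}(yH)\|$ and then working with the continuous compactly supported map $\phi(x)=f(x)f(x)^{*}$ into $A$.

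From there the arguments diverge. For upper semicontinuity the paper argues by contradiction, choosing a cutoff $\psi\in C_{c}(G/H)$ that is identically one near $yH$ with $\|\psi\cdot\phi(x)\|<\epsilon$, and deriving a contradiction from continuity of $x\mapsto\psi\cdot\phi(x)$ in $A$. Your triangle-inequality estimate $\|\phi(x_{\alpha})(y_{\alpha}H)\|\le\|\phi(x_{\alpha})-\phi(x)\|+\|\phi(x)(y_{\alpha}H)\|$ is more direct and avoids introducing the cutoff. For vanishing at infinity the paper uses a sequential subnet argument: extract a limit in the $G$-variable using compact support, then invoke \cite{wil:crossed}*{Proposition~C.10(a)} for the \emph{fixed} element $\phi(x)$ to force convergence in the $G/H$-variable. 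You instead achieve the required uniformity in $x$ by covering the compact set $\phi(K)\subset A$ with finitely many $\delta/2$-balls and taking the union of the corresponding compacta in $G/H$. Both routes are fine; yours is a bit more self-contained and does not rely on sequential compactness, while the paper's is shorter once one is willing to quote the $C_{0}(G/H)$-algebra facts and work with subsequences.
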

 \begin{proof}
   Since $\Bbb(x,yH)$ is an \ib, and in view of \eqref{eq:12}, we have
   \begin{equation*}
     \|\Phi(f)(x,yH)\|^{2}=\|f(x)f(x)^{*}(yH)\|.
   \end{equation*}
   Therefore it suffices to show that $(x,yH)\mapsto \|a(x)(yH)\|$ is
   upper semicontinuous and vanishes at infinity for $a\in
   C_{c}(G,A)$.

   To show upper semicontinuity, it suffices to show that if
   $(x_{i},y_{i}H)\to (x,yH)$ and $\|a(x_{i})(y_{i}H)\|\ge \epsilon>0$
   for all $i$, then we also have $\|a(x)(yH)\|\ge\epsilon$.  If
   $\|a(x)(yH)\|<\epsilon$, then we can find $\phi\in C_{c}(G/H)$ such
   that $\phi$ is identically one in a neighborhood of $yH$ and such
   that $\|\phi\cdot a(x)\|<\epsilon$.  Since we can assume that
   $\phi(y_{i}H)=1$ for all $i$, we certainly have
   $\|a(x_{i})\|\ge\epsilon$.  But this contradicts the fact that
   $x\mapsto \|a(x)\|$ is continuous.

   To see that $(x,yH)\mapsto \|a(x)(yH)\|$ vanishes at infinity,
   suppose that $\|a(x_{i})(y_{i}H)\|\ge\epsilon>0$ for all $i$.  It
   will suffice to see that $\sset{(x_{i},y_{i}H)}$ has a convergent
   subsequence.  Since $a$ has compact support, we can pass to a
   subsequence, relabel, and assume that $x_{i}\to x$.  Then by
   continuity, we can assume that $\|a(x)(y_{i}H)\|\ge \epsilon/2$ for
   large $i$.  Since $A$ is a $C_{0}(G/H)$-algebra, $yH\mapsto
   \|a(x)(yH)\|$ must vanish at infinity
   \cite{wil:crossed}*{Proposition~C.10(a)}.  Therefore $
   \sset{y_{i}H}$ must have a convergent subsequence. 
   This completes
   the proof.
 \end{proof}

 As in \cite{kmqw:xx09}*{Lemma~1.2} and
 \cite{ionwil:xx09a}*{\S3.2}, there is a nondegenerate
 homomorphism $\iota:A\to M\bigl(\cs(G,\B)\bigr)$ such that
 $\iota(a)f(x)=a f(x)$.  Then
 \begin{equation*}
   \Phi(\iota(a)f)(x,yH)=a(yH)\cdot
 \Phi(f)(x,yH).
 \end{equation*}
 Since $\iota$ is nondegenerate, it extends to
 $M(A)$, and by composition with the $C_{0}(G/H)$-structure map of
 $C_{0}(G/H)$ into the center of $M(A)$, we get a map
 $\hat\iota:C_{0}(G/H)\to M\bigl(\cs(G,\B)\bigr)$.  We'll write
 $\psi\cdot f$ in place of $\hat\iota(\psi)f$.  Note that
 \begin{equation*}
   % \label{eq:13}
   \Phi(\psi\cdot f)(x,yH)=\psi(yH)\Phi(f)(x,yH).
 \end{equation*}

 \begin{remark}
   \label{rem-approximation}
   If $p:\B\to X$ is an \usc-Banach bundle and $f$ is a not
   necessarily continuous section such that there are $f_{i}\in
   \sa_{0}(X;\B)$ converging uniformly to $f$, then $f\in
   \sa_{0}(X;\B)$.
 \end{remark}

 \begin{lemma}
   \label{lem-gghaction}
   If $f\in \sa_{c}(G;\B)$ and $\theta\in C_{0}(G\times G/H)$, then
   there is a section $\theta\cdot f\in \sa_{c}(G;\B)$ such that
   \begin{equation*}
     \Phi(\theta\cdot f)(x,yH)=\theta(x,yH)\Phi(f)(x,yH).
   \end{equation*}
 \end{lemma}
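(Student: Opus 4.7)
The plan is to first verify the lemma for elementary tensors $\theta(x,yH) = \phi(x)\psi(yH)$ with $\phi \in C_{c}(G)$ and $\psi \in C_{0}(G/H)$, then extend by linearity to the algebraic tensor product, and finally pass to the norm closure by a uniform approximation argument.

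For $\theta = \phi \otimes \psi$ as above, I would define $(\theta \cdot f)(x) := \phi(x)(\psi \cdot f)(x)$. This lies in $\sa_c(G;\B)$ with support contained in $\supp \phi \cap \supp f$, and combining the scalar factor $\phi(x)$ with the identity $\Phi(\psi \cdot f)(x,yH) = \psi(yH)\Phi(f)(x,yH)$ already displayed just before the statement of the lemma yields $\Phi(\theta \cdot f)(x,yH) = \theta(x,yH)\Phi(f)(x,yH)$. Extending linearly, the conclusion then holds for every $\theta$ in the algebraic tensor product $C_{c}(G) \odot C_{0}(G/H)$.

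For a general $\theta \in C_{0}(G \times G/H)$, I would fix $\chi \in C_{c}(G)$ with $\chi \equiv 1$ on $\supp f$ and set $K := \supp \chi$. Since the identity in the conclusion only constrains $\theta \cdot f$ where $f$ is nonzero, replacing $\theta$ by $\chi\theta \in C_{0}(K \times G/H) \cong C(K, C_{0}(G/H))$ loses no information. By the standard density of the algebraic tensor product in the minimal \cs-tensor product (an application of Stone--Weierstrass), I can choose a sequence $\theta_{n} \in C_{c}(G) \odot C_{0}(G/H)$, each supported in $K \times G/H$, converging uniformly to $\chi\theta$. Writing $\gamma_{n}(x) := \theta_{n}(x,\cdot) \in C_{0}(G/H)$ and using the multiplier action $\hat\iota_{A} : C_{0}(G/H) \to M(A)$, I get the estimate
\[
  \|(\theta_{n}\cdot f - \theta_{m}\cdot f)(x)\| \le \|\gamma_{n}(x) - \gamma_{m}(x)\|_{\infty}\, \|f(x)\|,
\]
so the sections $\theta_{n}\cdot f \in \sa_c(G;\B)$ form a uniformly Cauchy sequence with supports contained in $\supp f$. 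Remark~\ref{rem-approximation} then places the uniform limit in $\sa_0(G;\B)$, hence in $\sa_c(G;\B)$ given the support constraint; I define $\theta \cdot f$ to be this limit. The required pointwise identity passes to the limit because $\Phi(\cdot)(x,yH)$ is a contraction on each fibre $B(x)$.

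The only technical point that needs care is the fibrewise bound $\|\hat\iota_{A}(\psi) b\| \le \|\psi\|_{\infty}\|b\|$ for $b \in B(x)$, which is what makes the approximation argument work uniformly in $x$. This is routine given that $\B$ is saturated: each $B(x)$ is a nondegenerate left $A$-module, so the $A$-action extends to $M(A)$ through bounded operators of norm at most the $M(A)$-norm, and $\hat\iota_{A}$ is a $*$-homomorphism of $C_{0}(G/H)$ into the center of $M(A)$. I expect this to be the only obstacle; everything else reduces to bookkeeping with supports and the density of elementary tensors.
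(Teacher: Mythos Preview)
Your proof is correct and follows essentially the same route as the paper: define the action for elementary tensors, extend linearly to finite sums, then approximate a general $\theta$ uniformly and invoke Remark~\ref{rem-approximation} to land in $\sa_{c}(G;\B)$. The only difference is cosmetic: the paper derives the key fibrewise estimate from the identity $\|b\|^{2}=\sup_{yH}\|b(yH)\|^{2}$ (and even pauses to identify $B(x)$ isometrically with $\sa_{0}(G/H;\Bb(x))$ before checking continuity in $x$), whereas you obtain the same bound more directly from the multiplier action of $C_{0}(G/H)$ on each $B(x)$ through $M(A)$.
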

 \begin{proof}
   Notice that if $b\in B(x)$, then
   \begin{equation}\label{eq:14}
     \|b\|^{2}=\|bb^{*}\|=\sup_{yH\in G/H}\|bb^{*}(yH)\| =\sup_{yH\in
       G/H}\|b(yH)\|^{2}, 
   \end{equation}
   where $b(yH)$ denotes the image of $b$ in the quotient
   $\Bbb(x,yH)$. (Note that with this notation, $\Phi(f)(x,yH)$ can
   also be written as $f(x)(yH)$.)  Also, if
   $\theta(x,yH)=(\omega\tensor\phi)(x,yH):=\omega(x)\phi(yH)$, for
   $\omega\in C_{0}(G)$ and $\phi\in C_{0}(G/H)$, then we can define a
   continuous section $\theta\cdot f$ by $\theta\cdot
   f(x):=\omega(x)(\phi\cdot f)(x)$ (because scalar multiplication is
   continuous from $\C\times\B\to \B$).  Now suppose that we have a
   finite sum $\sum_{i}\omega_{i}\tensor \phi_{i}$ of such functions.
   Then
   \begin{equation}\label{eq:15}
     \begin{split}
       \|\sum_{i} (\omega_{i}\tensor\phi_{i})\cdot f\|&= \sup_{x\in
         G}\|\sum_{i} \omega_{i}(x) \phi_{i}\cdot f(x)\| \\
       &= \sup_{x\in G}\sup_{yH\in
         G/H}\|\sum_{i}\omega_{i}(x)\phi_{i}(yH)
       f(x)(yH)\| \\
       &\le \|\sum_{i} \omega_{i}\tensor \phi_{i}\|_{\infty}
       \sup_{x\in
         G}\sup_{yH\in G/H}\|f(x)(yH)\| \\
       &= \|\sum_{i} \omega_{i}\tensor \phi_{i}\|_{\infty} \|f\|.
     \end{split}
   \end{equation}
   This shows that $\theta\cdot f$ is a well defined element of
   $\sa_{c}(G;\B)$ provided $\theta\in C_{0}(G\times G/H)$ is a finite
   sum as above.

   If we let $\Bb(x):=\coprod_{yH\in G/H} \Bbb(x,yH)$, then using
   \cite{ionwil:xx09a}*{Theorem~2}\ and \eqref{eq:14}, we get an
   \usc-Banach bundle $\pb_{x}:\Bb(x)\to G/H$ such that $\Gamma:=
   \set{yH\mapsto b(yH):b\in B(x)}$ are continuous sections in
   $\sa_{0}(G/H;\Bb(x))$. Since $f\in \Gamma$ implies $\phi\cdot f\in
   \Gamma$ for any $\phi\in C_{0}(G/H)$, it follows that $\Gamma$ is
   uniformly dense in $\sa_{0}(G/H;\Bb(x))$ (see
   \cite{muhwil:dm08}*{Lemma~A.4} for example).  Therefore
   $\Phi_{x}:B(x)\to \sa_{0}(G/H;\Bb(x))$, defined by
   $\Phi_{x}(b)(yH)=b(yH)$, is an isometric isomorphism.  In
   particular, if $b\in B(x)$ and $\phi\in C_{0}(G/H)$ then there is a
   $\phi\cdot b\in B(x)$ such that $(\phi\cdot b)(yH)=\phi(yH)b(yH)$.

   Now suppose that $\theta$ is an arbitrary element of $C_{0}(G\times
   G/H)$ and $f\in \sa_{c}(G;\B)$.  In view of the above, for each
   $x\in G$, there is a $b(x)\in B(x)$ such that
   \begin{equation*}
     b(x)(yH)=\theta(x,yH)f(x)(yH)\quad\text{for all $yH\in G/H$.}
   \end{equation*}
   So it only remains to see that $x\mapsto b(x)$ is continuous.

   Let $\sset{\theta_{i}}$ be a sequence of functions, which are
   elementary sums as in \eqref{eq:15}, such that $\theta_{i}\to
   \theta$ uniformly.  Note that there are $g_{i}\in \sa_{c}(G;\B)$
   such that
   \begin{equation*}
     g_{i}(x)(yH)=\theta_{i}(x,yH)f(x)(yH)\quad\text{for all $x\in G$ and
       $yH\in G/H$.}
   \end{equation*}
   Computing just as in \eqref{eq:15}, we have
   \begin{equation*}
     \|g_{i}(x)-b(x)\|\le \|\theta_{i}-\theta\|_{\infty}\|f(x)\|.
   \end{equation*}
   Thus, by Remark~\ref{rem-approximation}, $x\mapsto b(x)$ defines an
   element of $\sa_{c}(G;\B)$ as required, and the lemma
     is proved.
 \end{proof}

 Now let
 \begin{equation*}
   \gccgb:=\set{f\in\gcgb:\text{$\Phi(f)$ vanishes off a compact set in
       $G\times G/H$}}.
 \end{equation*}

 It is a consequence of Lemmas \ref{lem-key-bb}~and
 \ref{lem-gghaction} that $\Gamma:= \set{\Phi(f):f\in \gccgb}$
 satisfies the requirements of \cite{ionwil:xx09a}*{Theorem~2}.  Thus
 we can equip $\pb:\Bb\to G\times G/H$ with the structure of an
 \usc-Banach bundle over $G\times G/H$ such that $\Gamma\subset
 \sa_{c}(G\times G/H;\Bb)$.

 \begin{remark}[Comments on Definitions]
   \label{rem-comments}
   We equip $G\times G/H$ with the usual Haar system where we identify
   $(G\times G/H)^{(0)}$ with $G/H$ and define
   $\sset{\lambda^{yH}}_{yH\in G/H}$ by
   \begin{equation*}
     \lambda^{yH}(g)=\int_{G}g(x,yH)\,dx.
   \end{equation*}
   There is however a subtlety in defining $\cs(G;\B)$.  Here, to make
   the proof of Proposition~\ref{prop-fb1} more elegant, we are going
   to treat $G$ as a groupoid.  The point is that then the involution
   on $\gcgb$ is given by $ f^{*}(x)=f(x^{-1})^{*}$.  It is often more
   natural when working with a Fell bundle over a group to use the
   involution used by Fell \& Doran in
   \citelist{\cite{fd:representations1} \cite{fd:representations2}}
   where $f^{*}(x)=\Delta(x^{-1})f(x^{-1})^{*}$ and $\Delta$ is the
   modular function on the group $G$.  For example, when using the
   second formulation, it is much easier to see that one recovers the
   usual group \cs-algebra and crossed product constructions as
   special cases.  Fortunately, the isomorphism class of $\cs(G,\B)$
   is unaffected by our choice --- see \cite{kmqw:xx09}*{Remark~1.5}.
 \end{remark}

 \begin{prop}
   \label{prop-fb1}
   As above, if $b\in \B$, let $\,b(yH)$ be the image of $b$ in
   $\Bbb(p(b),yH)$.  Then if $(m,n)\in
   \Bb^{(2)}=\set{(m,n)\in\Bb\times\Bb: \pb(m)=\pb(n)}$, it follows
   that $(m, n)$ is of the form $(b(yH),b'(p(b)^{-1}yH))$ for $b,b'\in
   \B$.  Then we get a well-defined map from $\Bb^{(2)}$ to $\Bb$ by
   $b(yH)b'(p(b)^{-1}yH):= bb'(yH)$.  We can also get a well defined
   involution from $\Bb$ to $\Bb$ via $b(yH)^{*}=b^{*}(p(b)^{-1}yH)$.
   Then, with respect to these operations, $\pb:\Bb\to G\times G/H$ is
   a Fell bundle. Furthermore, $\Bb\restr {\uH}$ and
     $\CC^{I}$ are isomorphic as Fell bundles over $H$.
 \end{prop}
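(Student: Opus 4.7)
The plan is to verify well-definedness and continuity of the operations, check the remaining Fell bundle axioms on fibres, and then identify $\Bb\restr{\uH}$ with $\CC^{I}$ fibrewise. For well-definedness of multiplication, suppose $b_{1},b_{2}\in B(x)$ satisfy $b_{1}(yH)=b_{2}(yH)$, so that $b_{1}-b_{2}\in I(yH)\cdot B(x)=B(x)\cdot I(x^{-1}yH)$ by Corollary~\ref{cor-I-switching}(a). Multiplying by $b'\in B(x')$ on the right and applying Corollary~\ref{cor-I-switching}(a) again puts $(b_{1}-b_{2})b'$ in $B(xx')\cdot I((xx')^{-1}yH)$, so $(b_{1}-b_{2})b'(yH)=0$ in $\Bbb(xx',yH)$. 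The second factor is handled analogously. For the involution, the identity $\bigl(B(x)\cdot I(x^{-1}yH)\bigr)^{*}=I(x^{-1}yH)\cdot B(x^{-1})$ combined with self-adjointness of $I(x^{-1}yH)$ shows that $b(yH)^{*}=b^{*}(x^{-1}yH)$ is also well-defined.

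The Fell bundle axioms on fibres---associativity, the $\cs$-identity, and positivity---descend directly from $\B$, using the \ib\ structure of $\Bbb(x,yH)$ provided by Corollary~\ref{cor-I-switching}(c) and the inner product formula~\eqref{eq:12}. Saturation of $\pb$ follows from saturation of $\B$ via $\clsp\bigl(\Bbb(x,yH)\cdot\Bbb(x,yH)^{*}\bigr)=\clsp(B(x)B(x)^{*})/I(yH)=A/I(yH)=A(yH)$. For continuity, I would use that the topology of $\Bb$ is characterized by the sections $\Gamma=\set{\Phi(f):f\in\gccgb}$ together with the $C_{0}(G\times G/H)$-module action of Lemma~\ref{lem-gghaction}. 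Given $f,g\in\gccgb$, the map $(x_{1},x_{2})\mapsto f(x_{1})g(x_{2})\in B(x_{1}x_{2})$ is a continuous section of the pullback of $\B$ over $G\times G$, and essentially the argument of Lemma~\ref{lem-key-bb} shows that $(x_{1},x_{2},yH)\mapsto f(x_{1})g(x_{2})(yH)$ is an \usc, compactly supported section of the relevant pullback of $\Bb$. Since this agrees fibrewise with the bundle product of $\Phi(f)$ and $\Phi(g)$, multiplication is continuous; continuity of the involution follows by the same device applied to $f^{*}$.

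For the final statement, when $(h,eH)\in\uH$ we have $h\in H$, hence $B(h)=C(h)$, $p(h)^{-1}\cdot eH=eH$, and $I(eH)=I$, so that
\begin{equation*}
\Bbb(h,eH)=B(h)/B(h)\cdot I(h^{-1}eH)=C(h)/C(h)\cdot I=C(h)/C_{I}(h)=C^{I}(h).
\end{equation*}
The formulas for multiplication and involution on $\Bb\restr{\uH}$ reduce directly to those on $\CC^{I}$, and the topologies agree because the restrictions of sections in $\Gamma$ to $\uH$ yield sections of $\CC^{I}$ coming from $\sa_{c}(H;\CC)$ (via extensions to elements of $\gccgb$ supported near $H\times\sset{eH}$), which by \cite{ionwil:xx09a}*{Proposition~15} determine the topology on $\CC^{I}$. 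The main obstacle here is the continuity of multiplication, where the fibrewise product must be shown to lift to a continuous section of a pullback of $\Bb$; the topology-matching step for the identification with $\CC^{I}$ also requires some care in extending sections of $\CC$ to sections of $\B$.
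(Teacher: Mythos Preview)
Your treatment of well-definedness and of the identification $\Bb\restr{\uH}\cong\CC^{I}$ is correct and parallels the paper's proof (the paper handles both factors of the product simultaneously rather than one at a time, but that is cosmetic, and its topology-matching argument simply restricts sections $\Phi(f)$ to $\uH$ rather than extending sections of $\CC$, which amounts to the same thing). The substantive difference, and the place where your sketch has a gap, is the continuity of multiplication.

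You propose to show that $(x_{1},x_{2},yH)\mapsto f(x_{1})g(x_{2})(yH)$ is a continuous section of a pullback of $\Bb$ by invoking ``essentially the argument of Lemma~\ref{lem-key-bb}.'' But that lemma only shows that the \emph{norm} $(x,yH)\mapsto\|\Phi(f)(x,yH)\|$ is upper semicontinuous; it was used precisely to \emph{define} the topology on $\Bb$ via \cite{ionwil:xx09a}*{Theorem~2}, and it says nothing about continuity of a section in that topology once it exists. Moreover, even granting that products of sections are continuous sections, the conclusion ``multiplication is continuous'' still requires an approximation step for arbitrary convergent pairs $(c_{i},c_{i}')\to(c,c')$ in $\Bb^{(2)}$, using the submultiplicative bound $\|mn\|\le\|m\|\,\|n\|$ together with the characterization of bundle convergence via sections (\cite{wil:crossed}*{Proposition~C.20}). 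The paper organizes all of this around a single intermediate fact: if $b_{i}\to b$ in $\B$ and $y_{i}H\to yH$ in $G/H$, then $b_{i}(y_{i}H)\to b(yH)$ in $\Bb$. This is proved by choosing $f\in\gcgb$ with $f(p(b))=b$, noting $\|f(p(b_{i}))-b_{i}\|\to 0$ in $\B$ and hence $\|\Phi(f)(p(b_{i}),y_{i}H)-b_{i}(y_{i}H)\|\to 0$ since quotient norms only decrease, and applying \cite{wil:crossed}*{Proposition~C.20}. With this in hand, continuity of multiplication in $\Bb$ reduces directly to continuity of multiplication in $\B$, followed by one more invocation of the same approximation device. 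Your pullback-bundle route can be completed, but it needs exactly these ingredients; the paper's intermediate lemma is both the missing piece in your argument and a shorter path to the conclusion.
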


\begin{proof}
  If $(m,n)\in \Bb^{(2)}$, then $(\pb(m),\pb(n))\in (G\times
  G/H)^{(2)}$.  Thus for appropriate $x,y,z\in G$, we must have
  $\pb(m)=(x,yH)$ and $\pb(n)=(z,x^{-1}yH)$.  Thus we can certainly
  find $b\in B(x)$ and $b'\in B(z)$ such that $b(yH)=m$ and
  $b(x^{-1}yH)=n$.  If $c\in B(x)$ and $c'\in B(z)$ also satisfy
  $c(yH)=m$ and $c'(x^{-1}yH)=n$, then there are $d\in I(y\cdot
  H)\cdot B(x)$ and $d'\in I(x^{-1}yH)\cdot B(z)$ such that $c=b+d$
  and $c'=b'+d'$.  Then
  \begin{align*}
    cc'&= bb'+db'+bd'+b'd' \\
    &\in bb' + I(yH) B(x)B(z) + B(x)I(x^{-1}yH)B(z)+
    I(yH)B(x)I(x^{-1}yH)
    B(z) \\
    \intertext{which, in view of Corollary~\ref{cor-I-switching} as
      well as the observations that $B(x)B(z)=B(xz)$ and
      $I(yH)^{2}=I(yH)$, is in} &bb'+I(yH)\cdot B(xz).
  \end{align*}
  Therefore $cc'(yH)=bb'(yH)$ in $B(xz)/I(yH)\cdot B(xz)$.  Therefore
  multiplication is well-defined.  A similar argument holds for the
  involution.

  To establish continuity of multiplication, we first need to observe
  that if $b_{i}\to b$ in $\B$ and if $y_{i}H\to yH$, then
  \begin{equation}
    \label{eq:1}
    b_{i}(y_{i}H)\to b(yH)\quad\text{in $\Bb$.}
  \end{equation}
  To see this, let $p(b_{i})=x_{i}$ and $p(b)=x$, and let $f\in\gcgb$
  be such that $f(x)=b$.  Since $f(x_{i})-b_{i}\to 0_{x}$, we must
  have
  \begin{equation*}
    \|f(x_{i})-b_{i}\|\to 0.
  \end{equation*}
  But then
  \begin{equation*}
    \|\Phi(f)(x_{i},y_{i}H)- b_{i}(y_{i}H)\|\to 0.
  \end{equation*}
  Since $\Phi(f)(x_{i},y_{i}H)\to \Phi(f)(x,yH)=b(yH)$, it follows from
  \cite{wil:crossed}*{Proposition~C.20} that \eqref{eq:1} holds.

  Now suppose that $(c_{i},c_{i}')\to (c,c')$ in $\Bb^{(2)}$ with
  \begin{equation*}
    \bigl(\pb(c_{i}),\pb(c_{i}')\bigr) =
    \bigl((x_{i},y_{i}H),(z_{i},x_{i}^{-1}y_{i}H)\bigr) \to
    \bigl(\pb(c),\pb(c')\bigr) = \bigl((x,yH),(z,x^{-1}yH)\bigr).
  \end{equation*}
  We want to show that $c_{i}c_{i}'\to cc'$.

  Keep in mind that if $f\in\gcgb$, then with our conventions,
  $f(x)(yH)$ and $\Phi(f)(x,yH)$ both denote the image of $f(x)$ in
  $\Bbb(x,yH)$.  In particular, if $f,g\in\gcgb$, then
  $(f(x)g(z))(yH)= \Phi(f)(x,yH)\Phi(g)(z,x^{-1}yH)$.  Thus if
  $f,g\in\gcgb$ are such that $\Phi(f)(x,yH)=c$ and
  $\Phi(g)(z,x^{-1}yH)=c'$, then $f(x_{i})g(z_{i})\to f(x)g(z)$ in
  $\B$.  Then it follows from \eqref{eq:1} that
  \begin{equation*}
    \Phi(f)(x_{i},y_{i}H)\Phi(g)(z_{i},x_{i}^{-1}y_{i}H) \to
    \Phi(f)(x,yH)\Phi(g)(z,x^{-1}yH). 
  \end{equation*}
  Since
  \begin{equation*}
    \|\Phi(f)(x_{i},y_{i}H)-c_{i}\|\to 0\quad \text{and} \quad
    \|\Phi(g)(z_{i}, x_{i}^{-1}y_{i}H)-c_{i}'\|\to 0,
  \end{equation*}
  we can use \cite{ionwil:xx09a}*{Lemma~1} to show that
  \begin{equation*}
    \|\Phi(f)(x_{i},y_{i}H)\Phi(g)(z_{i},x_{i}^{-1}y_{i}H)- c_{i}c_{i}'\|\to 0.
  \end{equation*}
  Then \cite{wil:crossed}*{Proposition~C.20} implies that
  $c_{i}c_{i}'\to cc'$.  Therefore, multiplication is continuous.  The
  continuity of the involution is proved similarly.

  It now follows easily that axioms (a), (b) and (c) of
  \cite{muhwil:dm08}*{Definition~1.1} are satisfied.  Furthermore, if
  $(e,yH)\in (G\times G/H)^{(0)}$, then $\Bbb(e,yH)$ is the
  \cs-algebra $A(yH)$.  So axiom~(d) is also satisfied.  And we have
  already observed that $\Bbb(x,yH)$ is a $A(yH)\sme A(x^{-1}yH)$-\ib.
  Thus all the axioms of \cite{muhwil:dm08}*{Definition~1.1} are
  satisfied and $\pb:\Bb\to G\times G/H$ is a Fell bundle as required.

  To verify the last assertion, we observe that $I(eH)=I$ and
  $\Bbb(h,eH)=B(h)/B(h)\cdot I=C^{I}(h)$.  Thus, both $\Bb\restr{\uH}$
  and $\CC^{I}$ are built from
  \begin{equation*}
    \coprod_{h\in H} C^{I}(h).
  \end{equation*}
  Therefore it suffice to see that the identity map is a
  homeomorphism.  But in any Banach bundle $p:\A\to X$, we have
  $a_{i}\to a$ in $\A$ if and only if for some section $g\in
  \sa_{c}(X;\A)$ with $g(p(a))=a$, we have $\|a_{i}-g(p(a_{i}))\|\to
  0$ (for example, see \cite{muhwil:dm08}*{Lemma~A.3}).  But if $b\in
  C^{I}(h_{0})$, then there is a $f\in\sa_{cc}(G;\B)$ such that
  $[f(h_{0})]=b$.  But $h\mapsto [f(h)] $ is a section of $\CC^{I}$
  and $h\mapsto \Phi(f)(h,eH)=[f(h)]$ is also a section of
  $\Bb\restr{\uH}$.  It now follows easily that the identity map is
  bicontinuous.  This completes the proof of the proposition.
\end{proof}

\begin{lemma}
  \label{lem-cc-dense}
  We have that $\gccgb$ is dense in $\gcgb$ in the inductive limit
  topology.
\end{lemma}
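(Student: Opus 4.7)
The plan is to approximate any $f\in\gcgb$ by sections of the form $\phi\cdot f$, where $\phi\in C_c(G/H)$ acts on $f$ through the $C_0(G\times G/H)$-module structure supplied by Lemma~\ref{lem-gghaction}, applied to $\theta(x,yH):=\phi(yH)$. The resulting $\phi\cdot f\in\gcgb$ satisfies $\Phi(\phi\cdot f)(x,yH)=\phi(yH)\Phi(f)(x,yH)$, which is supported in the compact set $\supp(f)\times\supp(\phi)\subset G\times G/H$, so $\phi\cdot f\in\gccgb$ automatically.

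It remains to choose $\phi$ so as to push $\phi\cdot f$ uniformly back to $f$. The key input is Lemma~\ref{lem-key-bb}, which asserts that $\|\Phi(f)\|$ vanishes at infinity on $G\times G/H$. Given $\epsilon>0$, I would fix a compact $M_\epsilon\subset G\times G/H$ off which $\|\Phi(f)\|<\epsilon$, and let $L_\epsilon\subset G/H$ be the (compact) projection of $M_\epsilon$; then $\|\Phi(f)(x,yH)\|<\epsilon$ whenever $yH\notin L_\epsilon$, for every $x\in G$. Choose $\phi_\epsilon\in C_c(G/H)$ with $0\le\phi_\epsilon\le 1$ and $\phi_\epsilon\equiv 1$ on $L_\epsilon$. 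Applying the fibrewise norm formula \eqref{eq:14} to $(\phi_\epsilon\cdot f-f)(x)\in B(x)$ yields
\begin{equation*}
  \|(\phi_\epsilon\cdot f-f)(x)\|=\sup_{yH\in G/H}|\phi_\epsilon(yH)-1|\,\|f(x)(yH)\|\le\epsilon
\end{equation*}
uniformly in $x\in G$, since the supremum is zero on $L_\epsilon$ and bounded by $1\cdot\epsilon$ off $L_\epsilon$.

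To upgrade uniform convergence to convergence in the inductive limit topology, one needs the approximants to have support contained in a fixed compact subset of $G$. Equation~\eqref{eq:14} again supplies this: if $f(x)=0$ then every fibre $f(x)(yH)$ vanishes, so $(\phi\cdot f)(x)(yH)=0$ for all $yH$, whence $(\phi\cdot f)(x)=0$. Thus $\supp(\phi_\epsilon\cdot f)\subset\supp(f)$ for every $\epsilon$, so the $\phi_\epsilon\cdot f$ lie in a common $\Gamma_{\supp(f)}(G;\B)\cap\gccgb$ and converge to $f$ in the inductive limit topology. I do not expect a genuine obstacle here: once one has the $C_0(G/H)$-action from Lemma~\ref{lem-gghaction}, the fibrewise norm identity \eqref{eq:14}, and the vanishing at infinity from Lemma~\ref{lem-key-bb}, the argument reduces to using the $C_0(G/H)$-action to cut down the essential support in the $G/H$-direction while leaving the $G$-support alone.
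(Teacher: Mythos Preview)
Your argument is correct. The paper's proof is a one-line citation: ``This is a straightforward consequence of \cite{muhwil:dm08}*{Lemma~A.4} and \eqref{eq:14}.'' What you have written is essentially an explicit unpacking of that citation. The paper invokes the abstract density lemma (A.4) for $C_{0}$-modules, whereas you carry out the approximation by hand using an approximate identity $\sset{\phi_{\epsilon}}$ in $C_{c}(G/H)$ together with the vanishing-at-infinity statement of Lemma~\ref{lem-key-bb}; both routes rest on the same two ingredients, namely the $C_{0}(G/H)$-module structure and the fibrewise norm identity \eqref{eq:14}. Your version has the minor advantage of being self-contained, and your observation that $\supp(\phi_{\epsilon}\cdot f)\subset\supp(f)$ makes the inductive-limit convergence transparent.
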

\begin{proof}
  This is a straightforward consequence of
  \cite{muhwil:dm08}*{Lemma~A.4} and \eqref{eq:14}.
\end{proof}

\begin{prop}
  \label{prop-phi-iso}
  The map $\Phi:\gccgb\to \sa_{c}(G\times G/H;\Bb)$ extends to a
  isomorphism of $\cs(G,\B)$ onto $\cs(G\times G/H,\Bb)$.
\end{prop}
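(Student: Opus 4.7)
The plan is to show that $\Phi$ restricts to a $*$-algebra isomorphism $\gccgb\to\sa_{c}(G\times G/H;\Bb)$ and then to extend this to an isomorphism of the \cs-completions. I would first verify that $\Phi$ is a $*$-homomorphism by direct calculation. With the Haar system of Remark~\ref{rem-comments}, the convolution on $\sa_{c}(G\times G/H;\Bb)$ is
\begin{equation*}
(F*G)(x,yH)=\int_{G}F(z,yH)\cdot G(z^{-1}x,z^{-1}yH)\,dz.
\end{equation*}
Setting $F=\Phi(f)$ and $G=\Phi(g)$ and using the formula $b(yH)\cdot b'(p(b)^{-1}yH)=(bb')(yH)$ from Proposition~\ref{prop-fb1}, the integrand equals $\bigl(f(z)g(z^{-1}x)\bigr)(yH)$; since $b\mapsto b(yH)$ is a bounded linear map, the integral is $(f*g)(x)(yH)=\Phi(f*g)(x,yH)$. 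A parallel calculation with $b(yH)^{*}=b^{*}(p(b)^{-1}yH)$ yields $\Phi(f^{*})=\Phi(f)^{*}$.

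Next I would construct a $*$-algebraic inverse $\Psi\colon\sa_{c}(G\times G/H;\Bb)\to\gccgb$. The proof of Lemma~\ref{lem-gghaction} shows that for each $x\in G$ the map $\Phi_{x}\colon B(x)\to\sa_{0}(G/H;\Bb(x))$, $b\mapsto(yH\mapsto b(yH))$, is an isometric isomorphism, so I define $\Psi(F)(x):=\Phi_{x}^{-1}(F(x,\cdot))\in B(x)$. Compact support of $\Psi(F)$ in $G$ is immediate. For the continuity of $x\mapsto\Psi(F)(x)$, note that $\Phi(\gccgb)=\Gamma$ is the set of sections used to topologize $\Bb$ and, by \cite{muhwil:dm08}*{Lemma~A.4}, is inductive-limit-dense in $\sa_{c}(G\times G/H;\Bb)$; choosing $f_{i}\in\gccgb$ with $\Phi(f_{i})\to F$ in the inductive limit topology, the isometry property of $\Phi_{x}$ gives $\sup_{x}\|f_{i}(x)-\Psi(F)(x)\|\to 0$ with supports eventually in a common compact set, so Remark~\ref{rem-approximation} yields $\Psi(F)\in\gccgb$. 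Thus $\Psi$ is a two-sided inverse of $\Phi$, which is therefore a $*$-algebra isomorphism onto $\sa_{c}(G\times G/H;\Bb)$.

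To extend to the \cs-completions I would use an $I$-norm bound: the quotient estimate $\|\Phi(f)(x,yH)\|\le\|f(x)\|$ gives $\|\Phi(f)\|_{I}\le\|f\|_{I}$, so composing with any nondegenerate $*$-representation of $\cs(G\times G/H,\Bb)$ produces an $I$-norm continuous $*$-representation of $\gccgb$ which, by Lemma~\ref{lem-cc-dense} and standard disintegration, extends to an inductive-limit continuous $*$-representation of $\gcgb$ and thus integrates to a $*$-representation of $\cs(G,\B)$. This yields $\|\Phi(f)\|_{\cs(G\times G/H,\Bb)}\le\|f\|_{\cs(G,\B)}$, so $\Phi$ extends to a $*$-homomorphism of the \cs-completions with dense range. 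The main obstacle will be injectivity of this extension, because the analogous $I$-norm bound for $\Psi$ fails: $\|\Psi(F)(x)\|=\sup_{yH}\|F(x,yH)\|$ is not controlled by $\|F\|_{I}$. I would circumvent this by exploiting that $\Phi$ is $C_{0}(G/H)$-linear---with the $C_{0}(G/H)$-structures on $\cs(G,\B)$ coming from $\hat\iota$ and on $\cs(G\times G/H,\Bb)$ from its unit space---and verifying the isomorphism fiberwise over $G/H$, or alternatively by a more refined extension argument for $\Psi$ using the approximations of Lemma~\ref{lem-gghaction}.
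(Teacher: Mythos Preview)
Your argument tracks the paper's almost exactly through the $*$-homomorphism verification, the surjectivity of $\Phi$ on $\gccgb$, and the forward norm bound $\|\Phi(f)\|\le\|f\|$ via the $I$-norm estimate and Lemma~\ref{lem-cc-dense}. The only substantive divergence is at the point you flag as the main obstacle, and there you have in fact already done the work but not recognized it.

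You correctly observe that $\Psi$ is not $I$-norm bounded, and you propose to rescue injectivity via a $C_{0}(G/H)$-fiberwise argument or an unspecified refinement. Neither is needed. In your surjectivity step you show that if $\Phi(f_{i})\to F$ in the inductive limit topology then $f_{i}\to\Psi(F)$ in the inductive limit topology on $\gcgb$; this is precisely the statement that $\Psi=\Phi^{-1}$ is inductive-limit continuous. The paper exploits exactly this: compose a faithful representation $R$ of $\cs(G,\B)$ with $\Phi^{-1}$ to obtain a $*$-homomorphism $R'$ of $\sa_{c}(G\times G/H;\Bb)$ that is continuous in the inductive limit topology, and then invoke \cite{muhwil:dm08}*{Remark~4.14}, which says that such a $*$-homomorphism is automatically bounded for the universal norm. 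This gives $\|f\|=\|R(f)\|=\|R'(\Phi(f))\|\le\|\Phi(f)\|$ and finishes the proof. So the missing idea is simply that inductive-limit continuity, not $I$-norm continuity, is the right hypothesis on the $\sa_{c}(G\times G/H;\Bb)$ side, and the disintegration theorem (via \cite{muhwil:dm08}*{Remark~4.14}) supplies the norm bound from that.
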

\begin{proof}
  First we'll show that $\Phi$ is a $*$-homomorphism.  Then we'll see
  that $\Phi$ is a bijection of $\gccgb$ onto $\sa_{c}(G\times
  G/H;\Bb)$.  We'll finish by showing that $\Phi$ and $\Phi^{-1}$
  are bounded with respect to the universal norms.

  Since $b\mapsto b(yH)$ is a bounded linear map of $B(x)$ onto the
  quotient $B(x)/I(yH)\cdot B(x)$,
  \begin{align*}
    f*g(x)(yH)&=\int_{G} (f(z)g(z^{-1}x))(yH)\,dz \\
    &= \int_{G} f(z)(yH) g(z^{-1}x)(z^{-1}yH)\,dz.
  \end{align*}
  Therefore, $\phi(f*g)(x,yH)=\Phi(f)*\Phi(g)(x,yH)$, and $\Phi$
  preserves multiplication.  Similarly,
  \begin{equation*}
    \Phi(f)^{*}(x,yH)=\Phi(f)(x^{-1},x^{-1}yH)^{*} =
    \bigl(f(x^{-1})(x^{-1}yH)  \bigr)^{*} = f(x^{-1})^{*}(yH),
  \end{equation*}
  while on the other hand,
  \begin{equation*}
    \Phi(f^{*})(x,yH)=f^{*}(x)(yH)=f(x^{-1})^{*}(yH).
  \end{equation*}
  Thus, $\Phi$ is a $*$-homomorphism.  Clearly, $\Phi$ is injective.

  Using Lemma~\ref{lem-gghaction}, we see that
  $\Phi\bigl(\gccgb\bigr)$ is a $C_{0}(G\times G/H)$ module.  Thus
  \cite{muhwil:dm08}*{Lemma~A.4} implies that $\Phi\bigl(\gccgb\bigr)$
  is inductive limit dense in $\sa_{c}(G\times G/H,\Bb)$.  Thus if
  $F\in \sa_{c}(G\times G/H;\Bb)$, then there are $f_{i}\in \gccgb$
  such that $\Phi(f_{i})\to F$ in the inductive limit topology.  For
  each $x\in G$, $yH\mapsto F(x,yH)$ is (not necessarily continuous)
  section of $\pb_{x}:\Bb_{x}\to G/H$ which is uniformly approximated
  by the continuous sections $yH\mapsto
  \Phi(f_{i})(x,yH)=f_{i}(x)(yH)$.  Therefore, by
  Remark~\ref{rem-approximation}, there is a $f(x)\in B(x)$ such that
  $f(x)(yH)=F(x,yH)$.  But $f_{i}$ must converge uniformly to $f$, and
  it follows that $f\in \gccgb$.  But then $\Phi(f)=F$.

  Now we notice that
  \begin{equation*}
    \int_{G} \|\Phi(f)(x,yH)\|\,dx \le \int_{G}\|f(x)\|\,dx.
  \end{equation*}
  Thus,
  \begin{equation*}
    \|\Phi(f)\|_{I}\le \|f\|_{I},
  \end{equation*}
  where the $I$-norms are computed in $\sa_{c}(G\times G/H;\Bb)$ and
  $\gcgb$, respectively.  Let $L$ be a faithful representation of
  $\cs(G\times G/H,\Bb)$.  Then $L\circ \Phi$ is a $I$-norm decreasing
  $*$-homomorphism of $\gccgb$, which must extend to a $I$-norm
  decreasing representation $L'$ of $\gcgb$ since $\gccgb$ is $I$-norm
  dense in $\gcgb$ by Lemma~\ref{lem-cc-dense}.  Therefore
  \begin{equation*}
    \|\Phi(f)\|=\|L(\Phi(f))\|=\|L'(f)\|\le \|f\|,
  \end{equation*}
  and $\Phi$ is norm decreasing for the universal norms.

  But if $\Phi(f_{i})\to \Phi(f)$ in the inductive limit topology on
  $\sa_{c}(G\times G/H;\Bb)$, then $f_{i}\to f$ in the inductive limit
  topology on $\gcgb$.  Thus if $R$ is a faithful representation of
  $\cs(G,\B)$, then $R\circ \Phi^{-1}$ is a $*$-homomorphism $R'$ of
  $\sa_{c}(G\times G/H;\Bb)$ which is continuous in the inductive
  limit topology.  By \cite{muhwil:dm08}*{Remark~4.14}, $R'$ is
  bounded and
  \begin{equation*}
    \|f\|=\|R(f)\|=\|R'(\Phi(f))\|\le \|\Phi(f)\|.
  \end{equation*}
  This completes the proof.
\end{proof}

% \section{The Proof of Theorem~\ref{thm-grp-thm}}
% \label{sec:proof-theorem}

% In view of Proposition~\ref{prop-phi-iso}, it suffices to see that
% $\cs(G\times G/H,\Bb)$ is Morita equivalent to $\cs(H,\CC^{I})$.
% However, if we let $u=(e,eH)\in (G\times G/H)^{(0)}$, then the
% corresponding stability group is $\uH=\set{(h,eH):h\in H}$.
% Furthermore, since $I(eH)=I$, $\Bbb(h,eH)=B(h)/B(h)\cdot I= C^{I}(h)$.
% Since $\CC^{I}=\coprod_{h\in H}C^{I}(h)$ has the \emph{unique}
% topology making $h\mapsto [f(h)]$ a continuous section for all $f\in
% \sa_{c}(H;\CC)$, Theorem~\ref{thm-fell-me} implies what we need.  This
% completes the proof.

%%%%%%%%%
%%%%%%%%%
% \bibliographystyle{amsxport}
% %%%
% \bibliography{references-nov01}
%%%%%%%%%
%%%%%%%%%
\def\noopsort#1{}\def\cprime{$'$} \def\sp{^}
% \bib, bibdiv, biblist are defined by the amsrefs package.
\begin{bibdiv}
\begin{biblist}

\bib{fd:representations1}{book}{
      author={Fell, James M.~G.},
      author={Doran, Robert~S.},
       title={Representations of {$*$}-algebras, locally compact groups, and
  {B}anach {$*$}-algebraic bundles. {V}ol. 1},
      series={Pure and Applied Mathematics},
   publisher={Academic Press Inc.},
     address={Boston, MA},
        date={1988},
      volume={125},
        ISBN={0-12-252721-6},
        note={Basic representation theory of groups and algebras},
      review={\MR{90c:46001}},
}

\bib{fd:representations2}{book}{
      author={Fell, James M.~G.},
      author={Doran, Robert~S.},
       title={Representations of {$*$}-algebras, locally compact groups, and
  {B}anach {$*$}-algebraic bundles. {V}ol. 2},
      series={Pure and Applied Mathematics},
   publisher={Academic Press Inc.},
     address={Boston, MA},
        date={1988},
      volume={126},
        ISBN={0-12-252722-4},
        note={Banach $*$-algebraic bundles, induced representations, and the
  generalized Mackey analysis},
      review={\MR{90c:46002}},
}

\bib{gre:am78}{article}{
      author={Green, Philip},
       title={The local structure of twisted covariance algebras},
        date={1978},
     journal={Acta Math.},
      volume={140},
       pages={191\ndash 250},
}

\bib{ionwil:xx09a}{unpublished}{
      author={Ionescu, Marius},
      author={Williams, Dana~P.},
       title={Remarks on the ideal structure of {F}ell bundle
  {$C^*$}-algebras},
     address={preprint},
        date={2009},
        note={(arXiv:math.OA.0912.1124)},
}

\bib{kmqw:xx09}{unpublished}{
      author={Kaliszewski, Steven},
      author={Muhly, Paul~S.},
      author={Quigg, John},
      author={Williams, Dana~P.},
       title={Coactions and {F}ell bundles},
     address={preprint},
        date={2009},
        note={(arXiv:math.OA.0909.3259)},
}

\bib{mrw:jot87}{article}{
      author={Muhly, Paul~S.},
      author={Renault, Jean~N.},
      author={Williams, Dana~P.},
       title={Equivalence and isomorphism for groupoid {$C^*$}-algebras},
        date={1987},
        ISSN={0379-4024},
     journal={J. Operator Theory},
      volume={17},
      number={1},
       pages={3\ndash 22},
      review={\MR{88h:46123}},
}

\bib{muhwil:dm08}{article}{
      author={Muhly, Paul~S.},
      author={Williams, Dana~P.},
       title={Equivalence and disintegration theorems for {F}ell bundles and
  their {$C\sp *$}-algebras},
        date={2008},
        ISSN={0012-3862},
     journal={Dissertationes Math. (Rozprawy Mat.)},
      volume={456},
       pages={1\ndash 57},
      review={\MR{MR2446021}},
}

\bib{rw:morita}{book}{
      author={Raeburn, Iain},
      author={Williams, Dana~P.},
       title={Morita equivalence and continuous-trace {$C^*$}-algebras},
      series={Mathematical Surveys and Monographs},
   publisher={American Mathematical Society},
     address={Providence, RI},
        date={1998},
      volume={60},
        ISBN={0-8218-0860-5},
      review={\MR{2000c:46108}},
}

\bib{ram:jfa90}{article}{
      author={Ramsay, Arlan},
       title={The {M}ackey-{G}limm dichotomy for foliations and other {P}olish
  groupoids},
        date={1990},
        ISSN={0022-1236},
     journal={J. Funct. Anal.},
      volume={94},
      number={2},
       pages={358\ndash 374},
      review={\MR{MR1081649 (93a:46124)}},
}

\bib{wil:crossed}{book}{
      author={Williams, Dana~P.},
       title={Crossed products of {$C{\sp \ast}$}-algebras},
      series={Mathematical Surveys and Monographs},
   publisher={American Mathematical Society},
     address={Providence, RI},
        date={2007},
      volume={134},
        ISBN={978-0-8218-4242-3; 0-8218-4242-0},
      review={\MR{MR2288954 (2007m:46003)}},
}

\end{biblist}
\end{bibdiv}

\end{document}